\documentclass[10pt,draftcls,onecolumn]{IEEEtran}
\usepackage{epsfig,amssymb,latexsym,color,pifont,colordvi,multicol}
\usepackage[cmex10]{amsmath}
\usepackage{subfigure}

\definecolor{backgrey}{rgb}{0.86,0.86,0.86}
\definecolor{dblue}{rgb}{0,0.0,0.5}
\definecolor{dred}{rgb}{0.4,0.2,0}
\definecolor{dgreen}{rgb}{0.0,0.5,0}

\newcommand{\captionfonts}{\small}
\makeatletter  
\long\def\@makecaption#1#2{%
  \vskip\abovecaptionskip
  \sbox\@tempboxa{{\captionfonts #1: #2}}%
  \ifdim \wd\@tempboxa >\hsize
    {\captionfonts #1: #2\par}
  \else
    \hbox to\hsize{\hfil\box\@tempboxa\hfil}%
  \fi
  \vskip\belowcaptionskip}
\makeatother   
\newtheorem{theorem}{Theorem}

\newtheorem{assumption}[theorem]{Assumption}

\newtheorem{remark}[theorem]{Remark}

\newtheorem{definition}[theorem]{Definition}

\newtheorem{lemma}[theorem]{Lemma}


\newenvironment{proof}[1][Proof]{\textbf{#1.} }{\ \hspace*{\fill} \rule{0.5em}{0.5em}}

\usepackage{wrapfig}
\definecolor{backgrey}{rgb}{0.86,0.86,0.86}
\definecolor{dblue}{rgb}{0,0.0,0.5}
\definecolor{dred}{rgb}{0.4,0.2,0}
\definecolor{dgreen}{rgb}{0.0,0.5,0}
\def\oP{P^{1}}

\def\R{\mathbb{R}}

\newcommand{\sub}[2]{(#1)_{(#2)}}

 


\makeatletter  
\long\def\@makecaption#1#2{%
  \vskip\abovecaptionskip
  \sbox\@tempboxa{{\captionfonts #1: #2}}%
  \ifdim \wd\@tempboxa >\hsize
    {\captionfonts #1: #2\par}
  \else
    \hbox to\hsize{\hfil\box\@tempboxa\hfil}%
  \fi
  \vskip\belowcaptionskip}
\makeatother   


\title{Optimal stabilization using Lyapunov measures}
\author{\quad Arvind Raghunathan \quad Umesh Vaidya 
\thanks{A. Raghunathan is with Mitsubishi Electric Research Laboratories in Cambridge MA 02139
        {\tt\small raghunathan@merl.com}}%

\thanks{U. Vaidya is with the Department of Electrical \& Computer Engineering,
Iowa State University, Ames, IA 50011
        {\tt\small ugvaidya@iastate.edu}}%
        }
\begin{document}
\maketitle \thispagestyle{empty} \pagestyle{empty}

\begin{abstract}
Numerical solutions for the optimal feedback stabilization of
discrete time dynamical systems is the focus of this paper.
Set-theoretic notion of almost everywhere stability introduced
by the Lyapunov measure, weaker than conventional
Lyapunov function-based stabilization methods, is used for
optimal stabilization. The linear Perron-Frobenius transfer operator
is used to pose the optimal stabilization problem as an infinite
dimensional linear program. Set-oriented numerical methods are
used to obtain the finite dimensional approximation of the linear
program. We provide conditions for the existence of stabilizing feedback
controls and show  the optimal stabilizing feedback control
can be obtained as a solution of a finite dimensional linear program.
The approach
is demonstrated on stabilization of period two orbit in a controlled standard map.

\end{abstract}

\begin{IEEEkeywords}
Almost everywhere stability, optimal stabilization, numerical methods.\end{IEEEkeywords}

\section{Introduction}
Stability analysis and stabilization of nonlinear systems are two
of the most important, extensively studied problems in control
theory. Lyapunov functions are used for stability analysis and
control Lyapunov functions (CLF) are used in the design of
stabilizing feedback controllers.
Under
the assumption of detectability and stabilizability of the
nonlinear system, a positive valued optimal cost function of an
optimal control problem (OCP) can also be used as a control Lyapunov
function. The optimal controls of OCP are obtained as the solution of
the Hamilton Jacobi Bellman (HJB) equation.  The HJB equation is a
nonlinear partial differential equation and one must resort to
approximate numerical schemes for its solution. Numerical schemes
typically discretize the state-space; hence, the resulting problem size
grows exponentially with the dimension of the state-space.
This is commonly referred to as the
\textit{curse of dimensionality}.  The approach is particularly attractive
for feedback control of nonlinear systems with  lower dimensional
state space.  The method proposed in this paper also suffers from
the same drawback.
Among the vast literature available on the topic of solving  the HJB equation, we briefly review some of the related literature.

Vinter \cite{vinterpaper} was the first to propose a linear programming approach for nonlinear optimal control of continuous time systems.  This was exploited to develop a numerical algorithm, based on semidefinite programming and density function-based formulation by Rantzer and co-workers in \cite{hedlundrantzer,prajnarantzer}.  Global almost everywhere stability of stochastic systems was studied by van Handel \cite{vanhandel}, using the density function.  Lasserre, Hern\'{a}ndez-Lerma, and co-workers \cite{LassHernBook,approxInfLP} formulated the control of Markov processes as a solution of the HJB equation. An adaptive space discretization approach
is used in \cite{Grune_04}; a cell mapping approach is used in
\cite{cell-cell1} and  \cite{Junge_Osinga, Junge_scl_05} utilizes set
oriented numerical methods to convert the HJB to one of finding
the minimum cost path on a graph derived from transition.
In
\cite{Hernandez_ocp, Gait_ocp, Lasserre_ocp}, solutions to stochastic
and deterministic optimal control problems are proposed, using
a linear programming approach or using a sequence of LMI relaxations. Our paper also draws some connection to research on optimization and
stabilization of controlled Markov chains discussed in \cite{Meyn_sadhana}. Computational techniques based on the viscosity solution of the HJB equation is proposed for the approximation of value function and optimal controls in \cite{viscosity_solnHJB_book} (Chapter VI).

Our proposed method, in particular the computational approach, draws some similarity with the above discussed references on the approximation of the solution of the HJB equation \cite{cell-cell1, Junge_Osinga,Junge_scl_05,viscosity_solnHJB_book}. Our method, too, relies on discretization of state space to obtain globally optimal stabilizing  control. However, our proposed approach differs from the above references in the following two fundamental ways. The first main difference arises due to adoption of non-classical weaker set-theoretic notion of almost everywhere stability for optimal stabilization. This weaker notion of stability allows for the existence of unstable dynamics in the complement of the stabilized attractor set; whereas, such unstable dynamics are not allowed using the classical notion of Lyapunov stability adopted in the above references. This weaker notion of stability is advantageous from the point of view of feedback control design. The notion of almost everywhere stability and density function for its verification was introduced by Rantzer in \cite{Rantzer01}. Furthermore, Rantzer proved, that unlike the control Lyapunov function, the co-design problem of jointly finding the density function and the stabilizing controller is convex \cite{Praly_conconvex}. The Lyapunov measure  used in this paper for optimal stabilization can be viewed as a measure corresponding to the  density function \cite{VaidyaMehtaTAC, Rajeev_continuous_time_journal}. Hence, it enjoys the same convexity property for the controller design. This convexity property, combined with the proposed linear transfer operator framework, is precisely exploited in the development of linear programming-based computational framework for optimal stabilization using Lyapunov measures. The second main difference compared to references \cite{Meyn_sadhana} and \cite{viscosity_solnHJB_book} is in the use of the discount factor $\gamma>1$ in the cost function (refer to Remark \ref{remark_discount}). The discount factor plays an important role in controlling the effect of finite dimensional discretization or the approximation process on the true solution. In particular, by allowing for the discount factor, $\gamma$, to be greater than one, it is possible to ensure that the control obtained using the finite dimensional approximation is {\it truly} stabilizing the nonlinear system \cite{Vaidya_CLM,arvind_ocp_online}.

In a previous work \cite{Vaidya_CLM} involving Vaidya, the
problem of designing deterministic feedback controllers for
stabilization via control Lyapunov measure was addressed.  The authors
proposed solving the problem by using a mixed integer formulation or a non-convex
nonlinear program, which are not computationally efficient. There
are two main contributions of this paper. First, we show  a
deterministic stabilizing feedback controller can be constructed using a
computationally cheap tree-growing algorithm (Algorithm $1$,
Lemma \ref{stabGraphApp}).  The second main contribution of this paper is
 the extension of the Lyapunov measure framework introduced in \cite{VaidyaMehtaTAC}
 to design  optimal stabilization of an attractor set.
We prove  the optimal stabilizing controllers can be obtained
as the solution to a linear program. Unlike the approach proposed
in \cite{Vaidya_CLM}, the solution to the linear program is guaranteed
to yield deterministic controls. This paper is an extended version of the paper that appeared in the 2008 American Control Conference \cite{arvind_ocp}.

This paper is organized as follows. In Section \ref{prelim}, we
provide a brief overview of key results from
\cite{VaidyaMehtaTAC} and \cite{Vaidya_CLM} for stability
analysis, and stabilization of nonlinear systems using the Lyapunov
measure. The transfer
operators-based framework is used to formulate the OCP as an  infinite dimensional
linear program in Section \ref{optimal_control}. A computational
approach, based on set-oriented numerical methods, is proposed for
the finite dimensional approximation of the linear program in
Section \ref{computation}. 
Simulation results are presented in Section \ref{examples}, followed by  conclusions in Section \ref{conclusion}.

\section{Lyapunov measure, stability and stabilization}\label{prelim}
The {\it Lyapunov measure} and {\it control Lyapunov measure}
were introduced in \cite{VaidyaMehtaTAC,Vaidya_CLM} for stability analysis and stabilizing controller design in discrete-time dynamical systems of the form,
\begin{equation}
x_{n+1}=F(x_n),\label{system}
\end{equation}
where $F: X\rightarrow X$ is assumed to be continuous with $X\subset \mathbb{R}^q$, a compact set. We denote by ${\cal B}(X)$ the Borel-$\sigma$ algebra on $X$ and ${\cal M}(X)$, the vector space of a real valued measure on ${\cal B}(X)$. The mapping, $F$, is assumed to be nonsingular with respect to the Lebesgue measure $\ell$, i.e., $\ell(F^{-1}(B))=0$, for all sets $B\in {\cal B}(X)$, such that $\ell(B)=0$. In this paper, we are interested in optimal stabilization of an attractor set defined as follows:

\begin{definition}[Attractor set]  A set ${\cal A}\subset X$ is said to be forward invariant under $F$, if $F({\cal A})={\cal A}$. A
closed forward invariant set, $\cal A$, is said to be an attractor set, if it there exists a neighborhood $V\subset X$ of $\cal A$, such that $\omega(x)\subset {\cal A}$ for all $x\in V$, where $\omega(x)$ is the $\omega$ limit set of $x$ \cite{VaidyaMehtaTAC}.
\end{definition}

 \begin{remark}\label{remark_nbd}In the following definitions and theorems, we will use the notation, $U(\epsilon)$, to denote the $\epsilon>0$ neighborhood of the attractor set $\cal A$ and $m\in {\cal M}(X)$, a finite measure absolutely continuous with respect to Lebesgue.\end{remark}

\begin{definition}[Almost everywhere stable with geometric decay]\label{stable_a.e.}
The attractor set ${\cal A}\subset X$ for a dynamical system (\ref{system}) is said to be almost everywhere (a.e.) stable with geometric decay with respect to some finite measure, $m\in {\cal M}(X)$, if given any $\epsilon>0$, there exists $M(\epsilon)<\infty$ and $\beta<1$, such that
$m\{x\in {\cal A}^c : F^{n}(x)\in X\setminus U(\epsilon)\}<M(\epsilon) \beta^n$.
\end{definition}

The above set-theoretic notion of a.e. stability is introduced in \cite{VaidyaMehtaTAC} and verified by using the
linear transfer operator framework. For the discrete time dynamical system (\ref{system}), the linear transfer Perron Frobenius (P-F) operator \cite{Lasota}
 denoted by $\mathbb{P}_F: {\cal M}(X)\rightarrow {\cal M}(X)$ is given by,

\begin{equation}
[{\mathbb P}_F \mu](B)=\int_{X}\chi_{B}(F(x))d\mu(x)=\mu(F^{-1}(B)),
\end{equation}
where $\chi_B(x)$ is the indicator function supported on the set
$B\in {\cal B}(X)$ and $F^{-1}(B)$ is the inverse image of set $B$. We define a sub-stochastic operator as a
restriction of the P-F operator on the complement of the attractor
set as follows:
\begin{equation}
 [{\mathbb P}^1_F\mu](B):=\int_{{\cal A}^c}\chi_{B}(F(x))d\mu(x)\label{rest_PF},
\end{equation}
for any set $B\in {\cal B}({\cal A}^c)$ and $\mu\in {\cal M}({\cal A}^c)$.
The condition for the a.e. stability of an attractor set
$\cal A$ with respect to some finite measure $m$ is defined in terms of
the existence of the {\it Lyapunov measure} $\bar \mu$, defined as follows \cite{VaidyaMehtaTAC}
\begin{definition}[Lyapunov measure]\label{def_Lya_measure}
The Lyapunov measure is defined as any non-negative measure $\bar
\mu$, finite outside
$U(\epsilon)$ (see Remark \ref{remark_nbd}),  and satisfies the following
inequality, $[{\mathbb P}_F^1 \bar \mu ](B)<\gamma^{-1} \bar \mu(B)$, for some $\gamma \geq1$ and all sets $B\in {\cal B}( X\setminus
U(\epsilon))$, such that $m(B)>0$.
\end{definition}

The following theorem from \cite{Vaidya_converse} provides the
condition for a.e. stability with geometric decay.
\begin{theorem} \label{theorem_converse} An attractor set $\cal A$
for the dynamical system (\ref{system}) is a.e. stable with
geometric decay with respect to finite measure $m$, if and only if for all $\epsilon>0$ there exists
a non-negative measure $\bar \mu$, which is finite on
${\cal B}(X\setminus U(\epsilon))$  and satisfies

\begin{equation}
\gamma[ {\mathbb P}_F^1\bar \mu](B)-\bar \mu(B)=-m(B)\label{LME}
\end{equation} for all measurable sets
$B\subset X\setminus U(\epsilon)$ and for some $\gamma>1$.
\end{theorem}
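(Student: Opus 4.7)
The plan is to treat equation~(\ref{LME}), rewritten as $\bar\mu(B) = m(B) + \gamma[\mathbb{P}_F^1 \bar\mu](B)$, as a resolvent identity whose natural candidate solution is the Neumann series $\bar\mu(B) = \sum_{k=0}^{\infty}\gamma^k [(\mathbb{P}_F^1)^k m](B)$, and to prove both directions by geometric-series bookkeeping. The bridge between the two sides of the equivalence is the set identity that, for $B \in {\cal B}(X \setminus U(\epsilon))$,
\[ [(\mathbb{P}_F^1)^n m](B) = m\{x \in {\cal A}^c : F^n(x) \in B\}, \]
obtained by iterating the definition~(\ref{rest_PF}) and invoking forward invariance of ${\cal A}$ to drop the intermediate constraints $F^k(x) \in {\cal A}^c$, $1 \leq k \leq n-1$.

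For the sufficiency direction, suppose such a $\bar\mu$ exists for the given $\epsilon > 0$ with some $\gamma > 1$. Substituting the equation into itself and iterating gives, by induction on $n$, the telescoping identity $\bar\mu(B) = \sum_{k=0}^{n}\gamma^k [(\mathbb{P}_F^1)^k m](B) + \gamma^{n+1}[(\mathbb{P}_F^1)^{n+1}\bar\mu](B)$. Every term on the right is non-negative (since $\mathbb{P}_F^1$ preserves non-negativity and $\bar\mu, m \geq 0$), so each partial sum is bounded above by $\bar\mu(B) < \infty$ for $B \subset X \setminus U(\epsilon)$; in particular $\gamma^n [(\mathbb{P}_F^1)^n m](B) \leq \bar\mu(B)$ for all $n$. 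Taking $B = X \setminus U(\epsilon)$ and invoking the set identity yields $m\{x \in {\cal A}^c : F^n(x) \in X \setminus U(\epsilon)\} \leq \bar\mu(X \setminus U(\epsilon))\,\gamma^{-n}$, which is Definition~\ref{stable_a.e.} with $M(\epsilon) = \bar\mu(X \setminus U(\epsilon))$ and $\beta = \gamma^{-1} < 1$.

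For the necessity direction, fix $\epsilon > 0$ with corresponding constants $M(\epsilon)$ and $\beta < 1$ from a.e.\ stability, choose any $\gamma \in (1, 1/\beta)$ (non-empty since $\beta < 1$), and define $\bar\mu$ by the Neumann series above on ${\cal B}(X \setminus U(\epsilon))$. The set identity together with the decay hypothesis gives $[(\mathbb{P}_F^1)^k m](B) \leq M(\epsilon)\,\beta^k$ for $B \subset X \setminus U(\epsilon)$, so $\bar\mu(X \setminus U(\epsilon)) \leq M(\epsilon)/(1 - \gamma\beta) < \infty$. Monotone convergence justifies exchanging $\mathbb{P}_F^1$ with the sum, and a shift of index produces $\gamma[\mathbb{P}_F^1 \bar\mu](B) = \sum_{k=1}^{\infty}\gamma^k[(\mathbb{P}_F^1)^k m](B) = \bar\mu(B) - m(B)$, which is exactly~(\ref{LME}).

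The main obstacle I anticipate is verifying the set identity carefully. Unfolding~(\ref{rest_PF}) inductively gives $[(\mathbb{P}_F^1)^n m](B) = m\{x : x, F(x), \ldots, F^{n-1}(x) \in {\cal A}^c, \; F^n(x) \in B\}$; the real content is showing that, when $B \subset {\cal A}^c$, the intermediate constraints are redundant because forward invariance of ${\cal A}$ (any trajectory that enters ${\cal A}$ remains there) forces $F^n(x) \in {\cal A}^c$ to imply $F^k(x) \in {\cal A}^c$ for every $0 \leq k \leq n$. Once this identification is established, both directions reduce to linear algebra on the geometric series, and the only remaining sensitivity in the necessity direction is choosing $\gamma$ strictly inside $(1, 1/\beta)$ so that $\gamma\beta < 1$ guarantees finiteness of $\bar\mu$ on $X \setminus U(\epsilon)$.
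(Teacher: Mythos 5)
The paper itself does not print a proof of this theorem (it defers to the online version), so I can only judge your argument on its merits; the Neumann-series/resolvent strategy you use is the standard one for this family of results and is surely the intended route. Your necessity direction is correct and essentially complete: the identity $[(\mathbb{P}_F^1)^n m](B)=m\{x\in{\cal A}^c: F^n(x)\in B\}$ for $B\subset{\cal A}^c$ does follow from forward invariance of ${\cal A}$ exactly as you argue, the bound $\bar\mu(X\setminus U(\epsilon))\le M(\epsilon)/(1-\gamma\beta)$ is right for $\gamma\in(1,1/\beta)$, and the index shift verifies (\ref{LME}).

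The sufficiency direction, however, has a genuine gap in the very first step. The telescoping identity
\begin{equation*}
\bar\mu(B)=\sum_{k=0}^{n}\gamma^k[(\mathbb{P}_F^1)^k m](B)+\gamma^{n+1}[(\mathbb{P}_F^1)^{n+1}\bar\mu](B)
\end{equation*}
is obtained by applying (\ref{LME}) to the sets on which $[\mathbb{P}_F^1\bar\mu](B)=\bar\mu\bigl(F^{-1}(B)\cap{\cal A}^c\bigr)$ is evaluated, i.e.\ to $F^{-k}(B)\cap{\cal A}^c$. But the hypothesis only asserts (\ref{LME}) for sets contained in $X\setminus U(\epsilon)$, and $F^{-1}(B)\cap{\cal A}^c$ can meet $U(\epsilon)\setminus{\cal A}$ (a point may sit inside the $\epsilon$-neighborhood at time $k$ and land in $B\subset X\setminus U(\epsilon)$ at time $k+1$); indeed $\bar\mu$ is not even guaranteed finite there. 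So ``substituting the equation into itself'' is not licensed as written. If you instead retreat to the operator restricted to $X\setminus U(\epsilon)$ (for which the iteration does close up), you only bound $m\{x: x,F(x),\dots,F^n(x)\in X\setminus U(\epsilon)\}$, the mass of trajectories that \emph{never} enter $U(\epsilon)$ up to time $n$, which is strictly weaker than the quantity $m\{x\in{\cal A}^c: F^n(x)\in X\setminus U(\epsilon)\}$ appearing in Definition~\ref{stable_a.e.}, since trajectories may enter $U(\epsilon)$ and exit again. Closing this gap requires additional input --- e.g.\ using the ``for all $\epsilon>0$'' quantifier together with the attractor property of ${\cal A}$ (for small enough $\epsilon'$, trajectories entering $U(\epsilon')$ are eventually absorbed near ${\cal A}$) to control the re-entering trajectories, or strengthening the hypothesis so that (\ref{LME}) holds on all of ${\cal B}({\cal A}^c)$. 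A cosmetic point: Definition~\ref{stable_a.e.} uses a strict inequality, so take $M(\epsilon)$ slightly larger than $\bar\mu(X\setminus U(\epsilon))$.
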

\begin{proof}
We refer readers to Theorem \ref{theorem_converse} from \cite{arvind_ocp_online} for the proof.

\end{proof}

We consider the stabilization of dynamical systems of the form $x_{n+1}=T(x_n,u_n)$,
where $x_n\in X\subset \mathbb{R}^q$ and $u_n \in U\subset
\mathbb{R}^d$ are the state and the control input,
respectively. Both $X$ and $U$ are assumed  compact. The
objective is to design a feedback controller, $u_n=K(x_n)$, to
stabilize the attractor set $\cal A$. The stabilization problem is
solved using the Lyapunov measure by extending the P-F operator
formalism to the control dynamical system \cite{Vaidya_CLM}. We define
the feedback control mapping $C: X\rightarrow Y:= X\times U$ as
$C(x)=(x,K(x))$.
We denote by ${\cal B}(Y)$ the Borel-$\sigma$ algebra on $Y$ and ${\cal M}(Y)$ the vector space of real valued measures on ${\cal B}(Y)$. For any $\mu \in {\cal M}(X)$, the control mapping $C$  can be used to define a measure, $\theta\in {\cal M}(Y)$, as follows:

\begin{eqnarray}
&\theta(D):=[\mathbb P_C \mu](D)=\mu (C^{-1}(D))\nonumber\\
&[\mathbb P_{C^{-1}} \theta](B):=\mu(B)=\theta (C(B))\label{ess},
\end{eqnarray}
for all sets $D\in {\cal B}(Y)$ and $B\in {\cal B}(X)$. Since $C$ is an injective function with $\theta$ satisfying (\ref{ess}),  it follows from the theorem on disintegration of measure \cite{disintegration} (Theorem 5.8)  there exists a unique disintegration $\theta_x$ of the measure $\theta$  for $\mu$ almost all $x\in X$, such that $
\int_Y f(y)d\theta(y)=\int_X\int_{C(x)} f(y)d\theta_x(y)d\mu(x)$,
for any Borel-measurable function $f:Y\to \mathbb R$. In particular, for $f(y)=\chi_D(y)$, the indicator function for the set $D$, we obtain
$\theta(D)=\int_X \int_{C(x)}\chi_D(y) d\theta_x(y) d\mu(x)=[\mathbb{P}_C\mu](D).$
Using the definition of the feedback controller
mapping $C$, we write the feedback control system as $x_{n+1}=T(x_n, K(x_n))=T\circ C (x_n)$.
The system mapping $T: Y\rightarrow X$ can be associated with P-F operators ${\mathbb
P}_T: {\cal M}(Y)\rightarrow {\cal M}(X)$ as $
[{\mathbb P}_T \theta](B)=\int_{Y}  \chi_B(T(y))d\theta(y)$.
The P-F operator for the composition $T\circ C: X\rightarrow X$
can be written as a product of ${\mathbb P}_T$ and ${\mathbb P}_C$. In particular, we obtain \cite{arvind_ocp_online}
\begin{eqnarray*}
&&[{\mathbb P}_{T\circ C} \mu](B)=\int_Y \chi_B(T(y))d [\mathbb{P}_C \mu](y)\nonumber\\&=&[\mathbb{P}_T \mathbb{P}_C \mu](B)=\int_X \int_{C(x)} \chi_{B}(T(y))d\theta_x(y)d\mu(x)\label{PTC}.
\end{eqnarray*}

The P-F operators, $\mathbb{P}_T$ and $\mathbb{P_C}$, are used to define their restriction, $\mathbb{P}_T^1:{\cal M}({\cal A}^c\times U)\to {\cal M}({\cal A}^c)$, and $\mathbb{P}_C^1:{\cal M}({\cal A}^c)\to {\cal M}({\cal A}^c\times U)$ to the complement of the attractor set, respectively, in a way similar to  Eq. (\ref{rest_PF}).
The control Lyapunov measure introduced
in \cite{Vaidya_CLM} is defined as any non-negative
measure $\bar \mu\in {\cal M}({\cal A}^c)$,  finite on ${\cal
B}(X\setminus U(\epsilon))$, such that there exists a control mapping $C$ that satisfies $[{\mathbb P}^1_T  {\mathbb P}^1_C \bar \mu ](B)<\beta \bar \mu(B)$,
for every set $B\in {\cal B}( X\setminus U(\epsilon))$ and $\beta\leq 1$.
Stabilization of the  attractor set is posed
as a co-design problem of jointly obtaining the control Lyapunov
measure $\bar \mu$ and the control P-F operator ${\mathbb P}_C$ or
in particular disintegration of measure $\theta$, i.e., $\theta_x$. The disintegration measure $\theta_x$, which lives on the fiber of $C(x)$, in general, will not be absolutely continuous with respect to Lebesgue. For the deterministic control map, $K(x)$, the conditional measure, $\theta_x(u)=\delta(u-K(x))$, the Dirac delta measure. However, for the purpose of computation, we relax this  condition.
The purpose of this paper and the following sections are to extend
the Lyapunov measure-based framework for the optimal stabilization
of nonlinear systems. One of the key highlights of this paper is
 the {\it deterministic} finite optimal stabilizing control is
obtained as the solution for a finite linear program.

\section{Optimal stabilization}\label{optimal_control}
The objective is to design a feedback controller for the
stabilization of the attractor set, $\cal A$, in a.e. sense, while
minimizing a suitable cost function. Consider the following
control system,
\begin{equation} x_{n+1}=T(x_n,u_n),
\label{control_syst}
\end{equation}
where $x_n\in X\subset \mathbb{R}^q$ and $u_n\in U\subset
\mathbb{R}^d$ are state and control input, respectively, and $T:X\times U=:Y\to X$. Both $X$ and $U$ are assumed compact. We define $X_1:=X\setminus U(\epsilon)$.
\begin{assumption}
We assume  there exists a feedback controller mapping $C_0(x)=(x,K_0(x))$, which locally stabilizes the
invariant set ${\cal A}$, i.e., there exists a neighborhood $V$ of $\cal A$
such that $T\circ C_0(V)\subset V$ and $x_n\rightarrow \cal A$ for all
$x_0\in V$; moreover ${\cal A} \subset U(\epsilon)\subset V$.
\end{assumption}

Our objective is to construct the optimal stabilizing controller
for almost every initial condition starting from $X_1$. Let
$C_{1}: X_1\rightarrow Y$ be the stabilizing control map for $X_1$.
The control mapping $C: X\to X\times U$ can be written as follows:
\begin{equation}C(x)=\left \{\begin{array}{ccl}C_0(x)=(x,K_0(x))&
{\rm for}&x\in U(\epsilon)\\
C_1(x)=(x,K_1(x))&{\rm for}&x\in X_1.
\end{array}\right.
\end{equation}

Furthermore, we assume the feedback control system $T\circ C:
X\rightarrow X$ is non-singular with respect to the Lebesgue measure,
$m$. We seek to design the controller mapping, $C(x)=(x,K(x))$, such that the attractor set $\cal A$ is a.e. stable with geometric decay rate $\beta<1$, while minimizing the following cost function,
\begin{equation} {\cal
C}_C(B)=\int_{B}\sum_{n=0}^{\infty} \gamma^n G\circ C(x_n)
dm(x),\;\;\;\;\label{cost_sum}
\end{equation}
where $x_0=x$, the cost function $G: Y\to {\mathbb R}$ is assumed a continuous non-negative real-valued function,  such that $G({\cal A},0)=0$, $x_{n+1}=T\circ C(x_n)$, and $0<\gamma<\frac{1}{\beta}$. Note, that in the cost function (\ref{cost_sum}), $\gamma$ is allowed  greater than one and this is one of the main departures from the conventional optimal control problem, where $\gamma\leq 1$. However, under the assumption that the controller mapping $C$ renders the attractor set a.e. stable with a geometric decay rate, $\beta<\frac{1}{\gamma}$, the cost function (\ref{cost_sum}) is finite.

 \begin{remark}
To simplify the notation, in the following we will use the notion of the scalar product between continuous function $h\in {\cal C}^0(X)$ and measure $\mu\in {\cal M}(X)$ as $\left<h,\mu\right>_X :=\int_X h(x)d\mu(x)$ \cite{Lasota}.
 \end{remark}
 The following theorem proves  the cost of stabilization of the
 set $\cal A$ as given in Eq. (\ref{cost_sum}) can be expressed using the control Lyapunov measure equation.
\begin{theorem}\label{theorem_ocp} Let the controller mapping, $C(x)=(x,K(x))$, be such that the attractor set $\cal A$ for the feedback control system $T\circ C:X\to X$ is a.e. stable with geometric decay rate $\beta<1$. Then, the cost function (\ref{cost_sum}) is well defined for $\gamma<\frac{1}{\beta}$ and, furthermore, the cost of stabilization of the attractor set $\cal A$ with respect to Lebesgue almost every initial condition starting from set $B\in {\cal B}(X_1)$ can be expressed as follows:

\begin{eqnarray}
&{\cal C}_C(B)= \int_{B}\sum_{n=0}^{\infty}\gamma^n G\circ C(x_n)
dm(x)\nonumber\\&=\int_{{\cal A}^c\times U}G(y)d[\mathbb{P}_C^1 \bar \mu_B](y)=
\left<G,{\mathbb P}_C^1 \bar \mu_B\right>_{{\cal A}^c\times
U},\label{inequality}
\end{eqnarray}
where, $x_0=x$ and $\bar \mu_B$ is the solution of the following  control
Lyapunov measure equation,
\begin{eqnarray}
\gamma \mathbb{P}_T^1\cdot \mathbb{P}_C^1\bar \mu_B(D)-\bar
\mu_B(D)=-m_B(D), \label{control_Ly}
\end{eqnarray}
for all $D\in {\cal B}(X_1)$ and  where $m_B(\cdot):=m(B\cap \cdot)$ is a finite measure
supported on the set $B\in {\cal B}(X_1)$.
\end{theorem}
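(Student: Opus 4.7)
The plan is to expand the cost function (\ref{cost_sum}) term-by-term using the Perron-Frobenius operator, interchange the sum with the integral, and then recognize the resulting series as the Neumann-series solution of the control Lyapunov measure equation (\ref{control_Ly}).

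First I would verify finiteness of the cost for every $B\in{\cal B}(X_1)$. Since $G$ is continuous on the compact set $Y$, it is bounded by some $M_G<\infty$. The a.e. geometric-decay hypothesis (Definition \ref{stable_a.e.}) gives $m\{x\in B:(T\circ C)^n(x)\in X_1\}\le M(\epsilon)\beta^n$, while on the portion of the trajectory already in $U(\epsilon)$ the local-stabilization assumption above ensures $x_n\to{\cal A}$, so that continuity of $G$ together with $G({\cal A},0)=0$ drives $G\circ C(x_n)\to 0$. The $n$-th summand of (\ref{cost_sum}) is then dominated by a constant multiple of $(\gamma\beta)^n$, and $\gamma\beta<1$ combined with Tonelli's theorem justifies the interchange of integral and sum.

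Next I would perform the operator-algebraic reduction. The defining identity of the P-F operator,
\begin{equation*}
\int_B h((T\circ C)^n(x))\,dm(x)=\langle h,\mathbb{P}_{T\circ C}^n m_B\rangle_X,
\end{equation*}
applied with $h=G\circ C$ and followed by a second application for $\mathbb{P}_C$, yields $\int_B G\circ C((T\circ C)^n(x))\,dm(x)=\langle G,\mathbb{P}_C\mathbb{P}_{T\circ C}^n m_B\rangle_Y$. Using the factorization $\mathbb{P}_{T\circ C}=\mathbb{P}_T\mathbb{P}_C$ recalled in Section \ref{prelim} together with $G({\cal A},0)=0$, the mass sent into ${\cal A}$ at each iterate is annihilated under the pairing with $G$, so $\mathbb{P}_T$ and $\mathbb{P}_C$ can be replaced by their sub-stochastic restrictions $\mathbb{P}_T^1,\mathbb{P}_C^1$ throughout. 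Summing the weighted series gives
\begin{equation*}
{\cal C}_C(B)=\left\langle G,\,\mathbb{P}_C^1\sum_{n=0}^{\infty}\gamma^n(\mathbb{P}_T^1\mathbb{P}_C^1)^n m_B\right\rangle_Y.
\end{equation*}

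Finally, I would identify the bracketed series with $\bar\mu_B$. The control Lyapunov measure equation (\ref{control_Ly}) reads $(I-\gamma\mathbb{P}_T^1\mathbb{P}_C^1)\bar\mu_B=m_B$ on ${\cal B}(X_1)$; the geometric-decay bound yields $[(\mathbb{P}_T^1\mathbb{P}_C^1)^n m_B](X_1)\le M(\epsilon)\beta^n$, so $\gamma\mathbb{P}_T^1\mathbb{P}_C^1$ is a contraction in total variation on the finite measures supported in $X_1$, and the unique finite solution is the Neumann series $\bar\mu_B=\sum_{n=0}^{\infty}\gamma^n(\mathbb{P}_T^1\mathbb{P}_C^1)^n m_B$. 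Substituting delivers (\ref{inequality}). The main obstacle I anticipate is the careful bookkeeping around restrictions: the sub-stochastic operators are defined on ${\cal M}({\cal A}^c)$ while the measure equation is posed on ${\cal B}(X_1)$ with $X_1={\cal A}^c\setminus(U(\epsilon)\setminus{\cal A})$, so I have to verify that the transient mass visiting $U(\epsilon)\setminus{\cal A}$ contributes negligibly under the pairing with $G$, and that the switch between the $C_0$-controlled dynamics in $V\supset U(\epsilon)$ and the $C_1$-controlled dynamics on $X_1$ does not break the claimed factorization.
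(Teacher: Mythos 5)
The paper itself defers the proof of this theorem to its online version, so there is no printed argument to compare against line by line; that said, your strategy --- termwise Koopman/P--F duality, the factorization $\mathbb{P}_{T\circ C}=\mathbb{P}_T\mathbb{P}_C$, passage to the sub-stochastic restrictions using invariance of ${\cal A}$ and $G\circ C|_{\cal A}=0$, and identification of $\bar\mu_B$ with the Neumann series $\sum_{n\ge 0}\gamma^n(\mathbb{P}_T^1\mathbb{P}_C^1)^n m_B$ --- is the natural route and is surely the intended one. The operator-algebraic half of your argument is sound.

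The gap is in the well-definedness step, and it is precisely the obstacle you flag at the end but do not resolve. The hypothesis of a.e.\ stability with geometric decay bounds only $m\{x\in B:(T\circ C)^n(x)\in X\setminus U(\epsilon)\}\le M(\epsilon)\beta^n$; it says nothing quantitative about trajectories after they enter $U(\epsilon)$. Your claim that the $n$-th summand is dominated by a constant times $(\gamma\beta)^n$ therefore fails for the contribution $\int_{B\cap\{x_n\in U(\epsilon)\setminus{\cal A}\}}\gamma^n\, G\circ C(x_n)\,dm$: there you have only the pointwise convergence $G\circ C(x_n)\to 0$ coming from continuity and $G({\cal A},0)=0$, and $\gamma^n\cdot o(1)$ with $\gamma>1$ need not be summable. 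If convergence to ${\cal A}$ inside $U(\epsilon)$ is subgeometric (say ${\rm dist}(x_n,{\cal A})\sim 1/n$ on a set of positive measure) while $G\circ C$ vanishes only linearly on ${\cal A}$, the series diverges, so finiteness cannot follow from the stated hypotheses alone; one needs an additional ingredient such as $G\circ C\equiv 0$ on $U(\epsilon)$ (so the pairing is effectively over $X_1\times U$, matching where (\ref{control_Ly}) is actually imposed) or a geometric convergence rate, compatible with $\gamma$, for the locally stabilized dynamics on $V$. The same issue resurfaces in your Neumann-series step: $(\mathbb{P}_T^1\mathbb{P}_C^1)^n m_B$ tracks mass that has not yet entered ${\cal A}$, not mass outside $U(\epsilon)$, so the bound $[(\mathbb{P}_T^1\mathbb{P}_C^1)^n m_B](X_1)\le M(\epsilon)\beta^n$ you invoke does not control $[(\mathbb{P}_T^1\mathbb{P}_C^1)^n m_B]({\cal A}^c)$; moreover $\gamma\mathbb{P}_T^1\mathbb{P}_C^1$ does not preserve measures supported in $X_1$ (mass can pass through $U(\epsilon)\setminus{\cal A}$ and re-enter $X_1$), so the contraction and uniqueness assertion must be restated on ${\cal M}({\cal A}^c)$ with an appropriately weighted norm. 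Making one of these additional hypotheses explicit would close the argument.
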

\begin{proof}
The proof is omitted in this paper, due to limited space but can be found in the online version of the paper \cite{arvind_ocp_online} (Theorem \ref{theorem_ocp}).
\end{proof}
 By appropriately selecting the measure on the right-hand side of the control Lyapunov measure equation (\ref{control_Ly}) (i.e., $m_B$),
stabilization of the attractor set with respect to
a.e. initial conditions starting from a particular set can
be studied.
The minimum cost of stabilization is defined as the minimum over
all a.e. stabilizing controller mappings, $C$, with a geometric decay as follows:
\begin{equation}
{\cal C}^{*}(B)=\min_{C}{\cal C}_C(B).
\end{equation}
Next, we write the infinite dimensional linear program for the optimal stabilization of the attractor
set $\cal A$. Towards this goal, we first define the projection map,
$P_1: {\cal A}^c\times U\rightarrow {\cal A}^c$
as:
$P_1(x,u)=x,$
and denote the P-F operator corresponding to $P_1$ as
$\mathbb{P}_{P_1}:  {\cal M}({\cal A}^c\times U)\rightarrow
{\cal M}({\cal A}^c)$, which can be written as
$[{\mathbb P}^1_{P_1} \theta](D)=\int_{{\cal A}^c\times U}\chi_D(P_1(y))d\theta(y)=\int_{D\times U}d\theta(y)=\mu(D)$.
 Using this definition of projection mapping, $P_1$, and the
corresponding P-F operator, we can write the linear program for the optimal stabilization of set $B$
with  unknown variable
$\theta$ as follows:
\begin{equation}
\min\limits_{\theta\geq 0} \left<G, \theta\right>_{{\cal A}^c \times U},\;\;
\mbox{s.t. } \gamma [{\mathbb P}^1_T \theta](D)-[{\mathbb P}^1_{P_1}
\theta](D)=-m_B(D)\label{linear_program},
\end{equation}
for $D\in {\cal B}(X_1)$.
\begin{remark}\label{remark_discount}
Observe  the geometric decay parameter satisfies $\gamma > 1$.
This is in contrast to most optimization problems studied in
the context of Markov-controlled processes, such as in Lasserre and
Hern\'{a}ndez-Lerma \cite{LassHernBook}. Average cost and
discounted cost optimality problems are considered in
\cite{LassHernBook, viscosity_solnHJB_book}. The additional flexibility provided by $\gamma>1$ guarantees the controller obtained from the finite dimensional approximation of the infinite dimensional program (\ref{linear_program}) also stabilizes the attractor set for system (\ref{control_syst}). For a more detailed discussion on the role of $\gamma$ on the finite dimensional approximation, we refer  readers to the online version of the paper \cite{arvind_ocp_online}.
\end{remark}


\section{Computational approach}\label{computation}
The objective of the present section is to present a computational
framework for the solution of the finite-dimensional approximation of
the optimal stabilization problem in \eqref{linear_program}.
There exists a number of references related to the solution of infinite
dimensional linear programs (LPs), in general, and those arising from the control
of Markov processes. Some will be described next.
The monograph by Anderson and Nash \cite{AndNash} is an excellent
reference on the properties of infinite dimensional LPs.

Our intent is to use the finite-dimensional approximation as a tool to obtain
 stabilizing controls to the infinite-dimensional system. First, we will derive conditions under which solutions to the finite-dimensional approximation exist.


Following
\cite{VaidyaMehtaTAC} and \cite{Vaidya_CLM}, we discretize the state-space and
control space  for the purposes of computations as described below.
Borrowing the notation from \cite{Vaidya_CLM}, let
${\cal X}_N := \{D_1,...,D_i,...,D_N\}$ denote a finite
partition of the state-space $X \subset \R^q$.
The measure space associated with ${\cal X}_N$ is $\R^N$.
We assume without loss of generality that the
attractor set, ${\cal A}$, is contained in $D_{N}$, that is,
${\cal A} \subseteq D_N$.
Similarly, the control space, $U$, is quantized and the control input
is assumed to take only finitely many control values from the
quantized set,
${\cal U}_M = \{u^1,\hdots,u^a,\hdots,u^M\}$,
where $u^a \in \R^d$.  The partition, ${\cal U}_M$, is identified with the vector
space, $\R^{N \times M}$. The system map that results from choosing the controls $u_N$ is denoted as
$T_{u_N}$ and the corresponding P-F operator is denoted as
$P_{T_{u_N}} \in \R^{N \times N}$.
Fixing the controls on all sets of
the partition to $u^a$, i.e., $u_N(D_i) = u^a$, for all $D_i \in
{\cal X}_N$, the system map that results is denoted as
$T_a$ with the corresponding P-F operator  denoted as
$P_{T_a} \in \R^{N \times N}$.  The entries for $P_{T_a}$ are calculated as:
$
\sub{P_{T_a}}{ij} := \frac{m({T_a}^{-1}(D_j)\cap D_i)}{m(D_i)}
$, where $m$ is the Lebesgue measure and $\sub{P_{T_a}}{ij}$ denotes the
$(i,j)$-th entry of the matrix.
Since $T_a:X \rightarrow X$, we have  $P_{T_a}$ is a Markov matrix.
Additionally, $\oP_{T_a} : \R^{N-1} \rightarrow \R^{N-1}$
will denote the finite dimensional counterpart of the
P-F operator restricted to ${\cal X}_N \setminus D_N$,
the complement of the attractor set.
It is easily seen that $\oP_{T_a}$ consists of the first
$(N-1)$ rows and columns of $P_{T_a}$.

In \cite{VaidyaMehtaTAC} and \cite{Vaidya_CLM}, stability analysis and
stabilization of the attractor set are studied,
using the above finite dimensional approximation of the P-F
operator. The finite dimensional approximation of the P-F operator results in  a weaker notion of stability, referred to as coarse stability \cite{VaidyaMehtaTAC,arvind_ocp_online}. Roughly speaking, coarse stability means stability modulo
attractor sets with domain of attraction smaller than
the size of cells within the partition.

With the above quantization of the control space and partition of the
state space, the determination of the control $u(x) \in U$ (or
equivalently $K(x)$) for all $x \in {\cal A}^c$  has now been cast as a problem
of choosing $u_N(D_i) \in {\cal U}_M$ for all sets $D_i
\subset {\cal X}_N$.  The finite dimensional approximation of the
optimal stabilization problem \eqref{linear_program} is equivalent to
solving the following finite-dimensional LP:
\begin{equation}
 \min\limits_{\theta^a,\mu \geq 0} \mbox{ }
\sum_{a=1}^M (G^a)^{'} \theta^a,\;\;\;
 \mbox{s.t. } \gamma\sum_{a=1}^{M} (P_{T_a})^{'}\theta^a
 - \sum_{a=1}^M\theta^a = -m
\label{lyaMeasLP},
\end{equation}
where we have used the notation $(\cdot)^{'}$ for the transpose operation,
$m \in \R^{N-1}$ and $\sub{m}{j} > 0$ denote the support of Lebesgue measure,
$m$, on the set $D_j$, $G^a \in \R^{N-1}$
is the cost defined on ${\cal X}_N\setminus D_N$ with
$\sub{G^a}{j}$ the cost associated with using control action $u^a$
on set $D_j$; $\theta^a \in \R^{N-1}$ are, respectively, the
discrete counter-parts of infinite-dimensional measure quantities in
\eqref{linear_program}. In the LP \eqref{lyaMeasLP}, we have not enforced the constraint,
\begin{equation}
\sub{\theta^a}{j} > 0 \mbox{ for exactly one } a \in \{1,...,M\},
\label{detControl}
\end{equation}
for each $j = 1,...,(N-1)$.  The above constraint ensures  the
control on each set in unique.  We prove in the following  the
uniqueness can be ensured without  enforcing the constraint,
provided the LP \eqref{lyaMeasLP} has a solution.
To this end, we introduce the dual LP
associated with the LP in \eqref{lyaMeasLP}.
The dual to the LP in \eqref{lyaMeasLP} is,
\vspace{-0.1in}
\begin{equation}
\max\limits_{V} \mbox{ } m^{'}V ,\;\;\;
\mbox{s.t. } V \leq \gamma\oP_{T_a}V + G^a \mbox{ } \forall
a = 1,...,M.
\label{lyaLP}
\end{equation}
In the above LP \eqref{lyaLP}, $V$ is the dual variable to the
equality constraints in \eqref{lyaMeasLP}.
\vspace{-0.1in}
\subsection{Existence of solutions to the finite LP}

We make the following assumption throughout this section.
\begin{assumption} \label{assumption_A}
There exists $\theta^{a} \in \R^{N-1} \;\forall\; a = 1,\ldots,M$,
such that the LP in \eqref{lyaMeasLP} is feasible for
some $\gamma > 1$.
\end{assumption}
Note, Assumption \ref{assumption_A} does not impose the requirement in
\eqref{detControl}.   For the sake of simplicity and clarity of
presentation, we will assume that the measure, $m$, in
\eqref{lyaMeasLP} is equivalent to the Lebesgue measure and $G > 0$.
Satisfaction of Assumption \ref{assumption_A} can be
verified using the following algorithm. \newline
\textbf{Algorithm $1$}
1) Set ${\cal I} := {1, ..., N-1}$, ${\cal I}_0 := {N}$, $L = 0$.\;\;\;2) Set ${\cal I}_{L+1} := \emptyset$.\;\;\;3) For each $i \in {\cal I} \setminus \{{\cal I}_0 \cup \ldots \cup {\cal
I}_L \}$ do\;\;\;a) Pick the smallest $a  \in {1, ..., M }$ such that
$\sub{P_{T_a}}{ij} > 0$ for some $j \in {\cal I}_L$.\;\;\;b) If $a$ exists then, set $u_N(D_i) := u^a$, ${\cal I}_{L+1} := {\cal
I}_{L+1} \cup \{i\}$.\;\;\;
4) End For\;\;\;
5) If ${\cal I}_0 \cup \ldots \cup {\cal I}_L = {\cal I}$ then,
set $L = L + 1$. STOP.\;\;\;
6) If ${\cal I}_{L+1} = \emptyset$ then, STOP.\;\;\;
7) Set $L = L + 1$. Go to Step 2.

The algorithm iteratively adds to ${\cal I}_{L+1}$, set $D_i$,
which has a non-zero probability of transition to any of the sets in
${\cal I}_L$.  In graph theory terms, the above algorithm iteratively
builds a tree starting with the set $D_N \supseteq {\cal A}$.
If the algorithm terminates in Step $6$,  then we
have identified sets
${\cal I} \setminus \{{\cal I}_0 \cup \ldots \cup {\cal I}_{L}\}$
that cannot be stabilized with the controls in ${\cal U}_M$.
If the algorithm terminates at Step $5$, then
we show in the Lemma below that a set of stabilizing controls exist.

\begin{lemma}\label{stabGraphApp}
Let ${\cal X}_N = \{D_1,\ldots,D_n\}$ be a partition of the
state space, $X$, and  ${\cal U}_M = \{u^1,\ldots,u^M\}$
be a quantization of the control space, $U$.  Suppose  Algorithm $1$
terminates in Step $5$ after $L^{\mathrm{max}}$ iterations, then the controls
$u_N$ identified  by the algorithm renders the system coarse stable.
\end{lemma}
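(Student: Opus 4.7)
The plan is to show that the deterministic controls $u_N$ produced by Algorithm 1 yield a sub-stochastic matrix $\oP_{T_{u_N}}$ on $\mathcal{X}_N \setminus D_N$ whose spectral radius is strictly less than one. This gives coarse stability, since it lets us construct a non-negative solution $\bar\mu$ to the finite-dimensional Lyapunov measure equation $\gamma \oP_{T_{u_N}}^{\prime}\bar\mu - \bar\mu = -m$ for some $\gamma>1$, i.e.\ a finite-dimensional analogue of Theorem \ref{theorem_converse}.

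The first step is a straightforward induction on $L$ to show that for every $i \in \mathcal{I}_L$ with $1 \le L \le L^{\mathrm{max}}$, there is a sequence $i = j_0, j_1, \ldots, j_L = N$ with $j_k \in \mathcal{I}_{L-k}$ and $\sub{P_{T_{u_N(D_{j_k})}}}{j_k j_{k+1}} > 0$ for each $k$. The base case $L = 1$ is immediate from Step 3(a), which picks a control $u^a$ with $\sub{P_{T_a}}{iN} > 0$. The inductive step is also immediate: for $i \in \mathcal{I}_{L+1}$ the algorithm selects $u^a$ with $\sub{P_{T_a}}{ij} > 0$ for some $j \in \mathcal{I}_L$, which by the inductive hypothesis admits a positive-probability path of length $L$ to $N$, and prepending $i$ gives the required path of length $L+1$. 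Termination in Step 5 guarantees $\mathcal{I} = \bigcup_{L=0}^{L^{\mathrm{max}}} \mathcal{I}_L$, so every non-attractor cell has such a path.

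The second step converts this reachability statement into a spectral radius bound. For each $i \in \{1,\ldots,N-1\}$, the probability of escape from $\mathcal{X}_N \setminus D_N$ to $D_N$ in exactly $L(i) \le L^{\mathrm{max}}$ steps under the controls $u_N$ is strictly positive, hence the row sum $\bigl(\oP_{T_{u_N}}^{L^{\mathrm{max}}} e\bigr)_i$ (with $e$ the all-ones vector in $\R^{N-1}$) is strictly less than $1$ for every $i$. Taking the maximum over finitely many rows gives $\|\oP_{T_{u_N}}^{L^{\mathrm{max}}}\|_\infty \le \eta$ for some $\eta < 1$, and hence $\rho(\oP_{T_{u_N}}) \le \eta^{1/L^{\mathrm{max}}} < 1$ by Gelfand's formula.

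The third step concludes. Pick any $\gamma \in (1, 1/\rho(\oP_{T_{u_N}}))$; then $I - \gamma \oP_{T_{u_N}}^{\prime}$ is invertible with Neumann-series inverse $\sum_{k \ge 0} (\gamma \oP_{T_{u_N}}^{\prime})^k$, which is entrywise non-negative. Hence $\bar\mu := (I - \gamma \oP_{T_{u_N}}^{\prime})^{-1} m \ge 0$ solves the finite control Lyapunov measure equation, and setting $\theta^a_i = \bar\mu_i$ when $u_N(D_i) = u^a$ and $0$ otherwise produces a feasible point for the LP \eqref{lyaMeasLP}, which is the finite-dimensional certificate of coarse stability. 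The main technical point is really the reachability-to-spectral-radius conversion in step two; the inductive construction and Neumann-series closure are routine. I also expect to need to argue explicitly that the length-$L^{\mathrm{max}}$ paths built in step one stay inside $\mathcal{X}_N \setminus D_N$ until the last step, so that they contribute to $\oP_{T_{u_N}}^{L^{\mathrm{max}}}$ and not merely to $P_{T_{u_N}}^{L^{\mathrm{max}}}$; this is automatic because the tree levels $\mathcal{I}_L$ for $L \ge 1$ are disjoint from $\mathcal{I}_0 = \{N\}$.
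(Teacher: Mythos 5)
Your proof is correct and follows essentially the same route as the paper's: both arguments use the tree levels built by Algorithm 1 to show every cell reaches $D_N$ with positive probability within $L^{\mathrm{max}}$ steps, conclude that the closed-loop sub-Markov matrix on ${\cal X}_N\setminus D_N$ is transient (spectral radius less than one), and infer coarse stability. Your version simply makes explicit the steps the paper compresses — the induction on levels, the row-sum/Gelfand bound, and the Neumann-series construction of the Lyapunov measure.
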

\begin{proof}
Let $P_{T_{u_N}}$ represent the closed loop transition matrix
resulting from the controls identified by Algorithm 1.
Suppose $\mu \in \R^{N-1}$, $\mu \geq 0, \mu \neq 0$ be any initial distribution
supported on the complement of the attractor set ${\cal X}_N \setminus
D_N$.   By construction, $\mu$ has a non-zero probability of entering the attractor set after
$L^{\mathrm{max}}$ transitions.  Hence,
\begin{equation*}
\sum\limits_{i=1}^{N-1}\sub{\mu'(P^1_{T_{u_N}})^{L^{\mathrm{max}}}}{i} <
\sum\limits_{i=1}^{N-1}\sub{\mu}{i}
\implies
\lim_{n \rightarrow \infty}(P^1_{T_{u_N}})^{nL^{\mathrm{max}}} \longrightarrow 0.
\end{equation*}
Thus, the sub-Markov matrix $P^1_{T_{u_N}}$ is transient and implies the claim.
\end{proof}

Algorithm 1 is  less expensive than the approaches proposed in \cite{Vaidya_CLM}, where a
mixed integer LP and a nonlinear programming approach
were proposed. The strength of our algorithm is that it is guaranteed to find
deterministic stabilizing controls, if they exist. The following lemma shows an optimal solution to \eqref{lyaMeasLP} exists under Assumption \ref{assumption_A}.

\begin{lemma}\label{strongDuality}
Consider a partition ${\cal X}_N = \{D_1,\ldots,D_N\}$ of the state-space
$X$ with attractor set ${\cal A} \subseteq D_N$ and a quantization
 ${\cal U}_M = \{u^1,\ldots,u^M\}$ of the control space $U$.  Suppose
 Assumption \ref{assumption_A} holds for some $\gamma > 1$ and for $m, G > 0$.
Then, there exists an
optimal solution, $\theta$, to the LP \eqref{lyaMeasLP} and
an optimal solution, $V$, to the dual LP \eqref{lyaLP}
with equal objective values, ($\sum\limits_{a=1}^M(G^a)^{'}\theta^a
= m'V$) and $\theta, V$ bounded.
\end{lemma}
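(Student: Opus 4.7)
The plan is to invoke the fundamental theorem of linear programming together with strong LP duality, exploiting the fact that the primal \eqref{lyaMeasLP} is already in standard form $\min c'x$ subject to $Ax = b$, $x \geq 0$, with $c = ((G^1)',\ldots,(G^M)')'$ and $Ax = b$ encoding the equality constraint. Assumption \ref{assumption_A} supplies primal feasibility, so the whole argument reduces to verifying the standard hypotheses under which an LP admits primal and dual optima with equal value.

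First I would handle the primal. Since $G^a > 0$ and $\theta^a \geq 0$ for each $a$, the primal objective $\sum_a (G^a)'\theta^a$ is bounded below by zero. Combined with feasibility, the fundamental theorem of linear programming guarantees that the infimum is attained at a basic feasible solution $\theta^\ast$; as a vertex of the primal polyhedron, $\theta^\ast$ has finite coordinates, giving the claimed boundedness of $\theta$.

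Next I would establish the dual existence. I claim $V = 0$ is dual-feasible: plugging into \eqref{lyaLP}, $0 \leq \gamma \oP_{T_a}\cdot 0 + G^a = G^a$ holds componentwise since $G^a > 0$. Weak LP duality then yields $m'V \leq \sum_a(G^a)'\theta^\ast$ for every dual-feasible $V$, so the dual objective is bounded above by the finite primal optimum. Applying the fundamental theorem of LP to the dual produces an optimal basic feasible $V^\ast$ with finite coordinates, and strong LP duality (which applies once both programs are feasible with finite optima) delivers the equality $\sum_a (G^a)'\theta^\ast = m'V^\ast$.

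The argument is essentially a direct appeal to textbook LP theory, so the main obstacle is simply bookkeeping: making explicit that the positivity of $G$ and $m$ enters only to give (i) a nonnegative primal objective, hence bounded below by zero, and (ii) dual feasibility via the trivial choice $V = 0$. No further structural property of the transition matrices $P_{T_a}$ or $\oP_{T_a}$ is invoked beyond what Assumption \ref{assumption_A} already provides, and no additional constraint such as \eqref{detControl} is imposed — determinism of the resulting control will be addressed separately from the existence statement proved here.
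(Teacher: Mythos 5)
Your proposal is correct and follows essentially the same route as the paper: primal feasibility from Assumption \ref{assumption_A}, dual feasibility via the trivial point $V=0$ (using $G^a>0$), and then strong LP duality to obtain attained, equal, finite optima. The extra detour through basic feasible solutions is harmless but unnecessary, since strong duality already delivers existence and boundedness of both optimal solutions once the two programs are feasible.
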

\begin{proof}
From Assumption \ref{assumption_A}, the LP \eqref{lyaMeasLP} is feasible.
Observe  the dual LP in \eqref{lyaLP} is always feasible
with a choice of $V = 0$.  The feasibility of primal and dual linear
programs implies the claim as a result of LP strong
duality \cite{MR1297120}.
\end{proof}

\begin{remark}
Note, existence of an optimal solution does not impose a positivity
requirement on the cost function, $G$.
In fact, even assigning $G = 0$ allows determination of a
stabilizing control from the Lyapunov measure equation
\eqref{lyaMeasLP}.  In this case, any feasible solution to \eqref{lyaMeasLP} suffices.
\end{remark}

The next result shows the LP \eqref{lyaMeasLP} always admits an optimal solution
satisfying \eqref{detControl}.

\begin{lemma} \label{compLemma1}
Given a partition ${\cal X}_N = \{D_1,\ldots,D_N\}$ of the state-space,
$X$, with attractor set, ${\cal A} \subseteq D_N$, and a quantization,
${\cal U}_M = \{u^1,\ldots,u^M\}$, of the control space, $U$.  Suppose
Assumption \ref{assumption_A} holds for some $\gamma > 1$ and for $m, G > 0$.  Then, there
exists a solution $\theta \in \R^{N-1}$ solving \eqref{lyaMeasLP}
and $V \in \R^{N-1}$ solving \eqref{lyaLP} for any
$\gamma \in [1,\overline{\gamma}_N)$.  Further, the following hold at
the solution: 1) For each $j = 1,...,(N-1)$, there exists at least one $a_j \in 1,...,M$,
  such that $\sub{V}{j} = \gamma\sub{\oP_{T_{a_j}}V}{j} + \sub{G^{a_j}}{j}$
  and $\sub{\theta^{a_j}}{j} >0$. 2) There exists a $\tilde{\theta}$ that solves \eqref{lyaMeasLP},  such that for each $j = 1,...,(N-1)$, there is exactly one
$a_j \in 1,...,M$, such that $\sub{\tilde{\theta}^{a_j}}{j} > 0$ and
$\sub{\tilde{\theta}^{a'}}{j} = 0$ for $a' \neq a_j$.
\end{lemma}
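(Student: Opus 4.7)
The plan is to prove the three parts of the lemma using LP strong duality, complementary slackness, and basic feasible solution (BFS) theory, set up by Lemma \ref{strongDuality}. For the existence claim, Assumption \ref{assumption_A} gives primal feasibility at $\gamma$, and the dual LP \eqref{lyaLP} is always feasible (take $V = 0$ and use $G^a > 0$); Lemma \ref{strongDuality} then yields bounded optimal primal and dual solutions with matching objective values for every $\gamma$ in the feasibility range $[1,\overline{\gamma}_N)$.

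For Property 1, I would let $(\theta, V)$ be any optimal primal-dual pair and write the $j$-th component of the primal equality constraint as
\[ \sum_{a=1}^M \sub{\theta^a}{j} - \gamma \sum_{a=1}^M \sub{\oP_{T_a}^{'} \theta^a}{j} = \sub{m}{j} > 0. \]
Since $\gamma > 0$ and all quantities are non-negative, the subtracted term is non-negative, giving $\sum_a \sub{\theta^a}{j} \geq \sub{m}{j} > 0$. Hence, at least one index $a_j \in \{1, \ldots, M\}$ satisfies $\sub{\theta^{a_j}}{j} > 0$. Complementary slackness applied to this strict positivity and the dual inequality $\sub{V}{j} \leq \gamma \sub{\oP_{T_{a_j}} V}{j} + \sub{G^{a_j}}{j}$ then forces equality, which is Property 1.

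For Property 2, I would take $\tilde\theta$ to be an optimal BFS of the primal LP. Because the feasible region of \eqref{lyaMeasLP} lies in the non-negative orthant and the primal is bounded below (by dual feasibility and Lemma \ref{strongDuality}), an optimal BFS exists. Since the LP has $(N-1)$ equality constraints, such a BFS carries at most $(N-1)$ positive entries among the $M(N-1)$ variables $\sub{\tilde\theta^a}{j}$. Applying Property 1 to $\tilde\theta$ shows that each row $j \in \{1, \ldots, N-1\}$ contains at least one positive entry, giving at least $(N-1)$ positives in total. The two bounds must coincide, forcing exactly one positive entry per row, which is precisely the deterministic control condition \eqref{detControl}.

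The main subtlety I foresee lies in the BFS step: one must verify that the feasible polytope has extreme points so that standard LP theory produces an optimal vertex. This is routine since the non-negativity constraints preclude lines in the polytope; any non-empty polyhedron contained in the non-negative orthant admits vertices, and a bounded LP attains its optimum at one of them.
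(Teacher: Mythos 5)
Your proposal is correct, and your treatment of the existence claim and of Property~1 is essentially the paper's: both reduce to Lemma~\ref{strongDuality} for existence, and both derive the ``at least one $a_j$ with $\sub{\theta^{a_j}}{j}>0$'' statement from the $j$-th primal equality constraint together with $\sub{m}{j}>0$ and non-negativity, then invoke complementary slackness for the tight dual constraint. Where you genuinely diverge is Property~2. The paper argues constructively: it selects $a(j)=\min\{a \mid \sub{\theta^a}{j}>0\}$, assembles the row-selected matrix $\oP_{T_u}$ and cost $G^u$, uses complementarity to get $V=\gamma\oP_{T_u}V+G^u$, deduces $\rho(\oP_{T_u})<1/\gamma$ from boundedness of $V$ and $G^u>0$, and then defines $\tilde\theta$ explicitly via $(I_{N-1}-\gamma(\oP_{T_u})^{'})^{-1}m$, verifying optimality by matching the primal objective to $m^{'}V$. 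You instead take an optimal basic feasible solution and count: at most $N-1$ positive entries at a vertex of a standard-form LP with $N-1$ equality constraints, at least one positive entry per index $j$ by (the feasibility half of) Property~1, hence exactly one per $j$. Your counting argument is shorter and cleaner for proving the lemma exactly as stated, and you correctly flag and dispatch the only subtlety (existence of an optimal vertex, since the feasible set lies in the non-negative orthant and the objective is bounded below). What the paper's longer construction buys, however, is material used downstream in Theorem~\ref{thmComp}: the spectral-radius bound $\rho(\oP_{T_u})<1/\gamma$ and the explicit identification of the deterministic control with $a(j)=\min\{a \mid \sub{\theta^a}{j}>0\}$, together with the closed-form Lyapunov measure $(I_{N-1}-\gamma(\oP_{T_u})^{'})^{-1}m$ for the resulting closed loop. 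If you adopt the BFS route, you would need to supply those facts separately before proving Theorem~\ref{thmComp}.
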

\begin{proof}

From the assumptions, we have that Lemma \ref{strongDuality} holds.
Hence, there exists $\theta \in \R^{N-1}$ solving \eqref{lyaMeasLP} and
$V \in \R^{N-1}$ solving \eqref{lyaLP} for any
$\gamma \in [1,\overline{\gamma}_N)$.  Further, $\theta$ and $V$
satisfy following the first-order optimality conditions \cite{MR1297120},
\begin{equation}
\begin{array}{c}
\sum\limits_{a=1}^M \theta^a -
\gamma\sum\limits_{a=1}^M(\oP_{T_a})'\theta^a = m \\
V \leq \gamma\oP_{T_a}V + G^a \perp \theta^a \geq 0 \mbox{ }\forall
a = 1,...,M.
\end{array} \label{statcond}
\end{equation}
We will prove each of the claims in order.

\emph{Claim 1:}
Suppose, there exists $j \in 1,...,(N-1)$, such that
$\sub{\theta^a}{j} = 0$ for all $a = 1,...,M$. Substituting in the
optimality conditions \eqref{statcond}, one obtains,
\begin{equation*}
\gamma\sum\limits_{a=1}^M\sub{(\oP_{T_a})'\theta^a}{j} = -\sub{m}{j}
\end{equation*}
which cannot hold, since, $\oP_{T_a}$ has non-negative entries, $\gamma > 0$ and
$\theta^a \geq 0$.   Hence, there exists at least one $a_j$ such that
$\sub{\theta^{a_j}}{j} > 0$.  The complementarity condition in
\eqref{statcond} then requires that $\sub{V}{j} =
\sub{\gamma\oP_{T_{a_j}}V}{j} + \sub{G^{a_j}}{j}$.  This proves the first claim.

\emph{Claim 2:} Denote $a(j) = \min \{a | \sub{\theta^a}{j} > 0\}$ for each
$j = 1,...,(N-1)$. The existence of $a(j)$ for each $j$ follows from
statement $1$. Define $\oP_{T_{u}} \in \R^{(N-1) \times (N-1)}$ and
$G^u \in \R^{N-1}$ as follows:
\begin{equation}
\begin{array}{rcl}
\sub{\oP_{T_u}}{ji} & := & \sub{\oP_{T_{a(j)}}}{ji} \mbox{ }
\forall i = 1,...,(N-1) \\
\sub{G^u}{j} & := & \sub{G^{a(j)}}{j}
\end{array}
\end{equation}
for all $j = 1,...,(N-1)$.  From the definition of $\oP_{T_{u}}$, $G^u$
and the complementarity condition in \eqref{statcond}, it is
easily seen
that $V$ satisfies
\begin{equation}
V = \gamma\oP_{T_u}V + G^u = \lim_{n\rightarrow
\infty}((\gamma\oP_{T_u})^n V+
\sum\limits_{k=0}^n(\gamma\oP_{T_u})^k G^u).\label{dpVeqns}
\end{equation}
Since $V$ is bounded and $G^u > 0$, it follows that $\rho(\oP_{T_u}) < 1/\gamma$.
Define $\tilde{\theta}$ as,
\begin{subequations}
\begin{equation}
\left[ \begin{array}{c}
\sub{\tilde{\theta}^{a(1)}}{1} \\
\vdots \\ \sub{{\tilde{\theta}}^{a(N-1)}}{N-1} \end{array} \right] =
(I_{N-1} - \gamma(\oP_{T_u})^{'})^{-1}m \label{defNZtheta}
\end{equation}
\begin{equation}
\sub{\tilde{\theta}^a}{j} = 0 \mbox{ } \forall j = 1,...,(N-1), \quad a \neq
a(j). \label{defzerotheta}
\end{equation}
\end{subequations}
The above is well-defined, since we have already shown that
$\rho(\oP_{T_u}) < 1/\gamma$.

From the construction of $\tilde{\theta}$, we have that for each $j$
there exists only one $a_j$,  namely $a(j)$, for which
$\sub{\tilde{\theta}^{a(j)}}{j} > 0$. It remains to show that
$\tilde{\theta}$ solves \eqref{lyaMeasLP}.
For this, observe
\begin{equation}
\begin{array}{rcl}
\sum\limits_{a=1}^M(G^a)^{'}{\tilde{\theta}}^a
& \overset{\eqref{defzerotheta}}{=}
& \sum\limits_{j=1}^{N-1}\sub{G^{a(j)}_j{\tilde{\theta}}^{a(j)}}{j}
 \overset{\eqref{defNZtheta}}{=}
(G^u)^{'}(I_{N-1} - \gamma(\oP_{T_u})^{'})^{-1}m \\
& = & ((I_{N-1} - \gamma\oP_{T_u})^{-1}G^u)^{'}m
 \overset{\eqref{dpVeqns}}{=}  V^{'}m.
\end{array}
\end{equation}
The primal and dual objectives are equal with the above definition
of $\tilde{\theta}$.  Hence, $\tilde{\theta}$ solves
\eqref{lyaMeasLP}.  The claim is proved.
\end{proof}

The following theorem states the main result.

\begin{theorem}\label{thmComp}
Consider a partition ${\cal X}_N = \{D_1,\ldots,D_N\}$ of the state-space,
$X$, with attractor set, ${\cal A} \subseteq D_N$, and a quantization,
${\cal U}_M = \{u^1,\ldots,u^M\}$, of the control space, $U$.
Suppose Assumption \ref{assumption_A} holds for some $\gamma > 1$ and for
$m, G > 0$. Then, the following statements hold: 1) there exists a bounded $\theta$, a solution to
  \eqref{lyaMeasLP} and a bounded $V$, a solution
  to \eqref{lyaLP}; 2) the optimal control for each set, $j= 1,...,(N-1)$, is given by
$ u(D_j) = u^{a(j)},\;\;\mbox{where } a(j) := \min\{a | \sub{\theta^a}{j} > 0\}$; 3) $\mu$ satisfying
$\gamma(\oP_{T_u})^{'}\mu - \mu = -m\;\;, \mbox{ where }
\sub{\oP_{T_u}}{ji} = \sub{\oP_{T_{a(j)}}}{ji}$
is the Lyapunov measure for the controlled system.
\end{theorem}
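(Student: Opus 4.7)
The theorem is largely an assembly of the earlier lemmas, with the new content being the identification of the deterministic control rule via $a(j)$ and the interpretation of $\mu$ as a Lyapunov measure. I would organize the proof by dispatching each of the three claims in order, leaning heavily on Lemma \ref{strongDuality} and Lemma \ref{compLemma1}.

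For Claim $1$, the plan is to invoke Lemma \ref{strongDuality} directly: under Assumption \ref{assumption_A} with $\gamma>1$ and $m,G>0$, the primal LP \eqref{lyaMeasLP} is feasible and the dual LP \eqref{lyaLP} is feasible with $V=0$, so LP strong duality yields bounded optimal $\theta$ and $V$ achieving equal objective values. No new work is required here.

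For Claim $2$, the idea is that the deterministic control rule $u(D_j)=u^{a(j)}$ with $a(j):=\min\{a\mid \sub{\theta^a}{j}>0\}$ is precisely the one extracted in the proof of Claim $2$ of Lemma \ref{compLemma1}. That lemma already shows that the choice of $a(j)$ is well-defined for every $j$ (by Claim $1$ of Lemma \ref{compLemma1}, at least one $a$ has $\sub{\theta^a}{j}>0$) and constructs a feasible $\tilde\theta$ concentrated on indices $a(j)$ whose primal cost equals $V^{'}m$, the dual optimum. Thus the deterministic control attains the optimal LP value, which in turn equals the optimal stabilization cost via the finite-dimensional analogue of Theorem \ref{theorem_ocp}, so it is optimal.

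For Claim $3$, I would argue that $\oP_{T_u}$, with rows selected according to $a(j)$, inherits the spectral bound $\rho(\oP_{T_u})<1/\gamma$ derived inside the proof of Claim $2$ of Lemma \ref{compLemma1} from equation \eqref{dpVeqns}, using boundedness of $V$ and positivity of $G^u$. Consequently $(I_{N-1}-\gamma(\oP_{T_u})^{'})$ is invertible, and its inverse admits the Neumann expansion $\sum_{k\geq 0}(\gamma(\oP_{T_u})^{'})^k$, which is entrywise non-negative since $\oP_{T_u}\geq 0$. Hence $\mu:=(I_{N-1}-\gamma(\oP_{T_u})^{'})^{-1}m$ is well-defined, non-negative, and by construction satisfies $\gamma(\oP_{T_u})^{'}\mu-\mu=-m$, matching Definition \ref{def_Lya_measure} in the finite-dimensional setting. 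Combining non-negativity with $\rho(\oP_{T_u})<1/\gamma<1$ shows that the sub-Markov matrix $\oP_{T_u}$ is transient, which is the finite-dimensional counterpart (coarse stability) of the geometric-decay condition in Theorem \ref{theorem_converse}; this justifies calling $\mu$ the Lyapunov measure for the controlled system.

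The main obstacle, to the extent there is one, is not any single step but making sure the identifications line up cleanly: that the $a(j)$ extracted from the optimal $\theta$ of the LP, the $a(j)$ used to build $\tilde\theta$ inside Lemma \ref{compLemma1}, and the $a(j)$ defining $\oP_{T_u}$ in Claim $3$ are all the same rule, and that the bound $\rho(\oP_{T_u})<1/\gamma$ carries over from the argument used for $\tilde\theta$ to the generic optimal $\theta$. I would explicitly note that the derivation of $\rho(\oP_{T_u})<1/\gamma$ only used boundedness of $V$ together with the complementarity relation in \eqref{statcond}, both of which hold for any primal-dual optimum under Assumption \ref{assumption_A}, so the spectral bound is a property of the extracted control, not of the particular $\tilde\theta$.
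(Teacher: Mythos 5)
Your proposal is correct and follows essentially the same route as the paper, which simply invokes Lemma \ref{strongDuality} for claim 1 and Lemma \ref{compLemma1} for the remaining claims. You merely spell out in more detail (quite reasonably) how the spectral bound $\rho(\oP_{T_u})<1/\gamma$ and the construction \eqref{defNZtheta} from inside the proof of Lemma \ref{compLemma1} deliver claims 2 and 3, which the paper leaves implicit with ``the remaining claims follow as a consequence.''
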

\begin{proof}
Assumption \ref{assumption_A} ensures that the linear programs
\eqref{lyaMeasLP} and \eqref{lyaLP} have a finite optimal solution
(Lemma \eqref{strongDuality}).  This proves the first claim of the
theorem and also allows the applicability of Lemma \ref{compLemma1}.
The remaining claims follow as a consequence.
\end{proof}

Although the results in this section  assumed  the measure, $m$,
is equivalent to Lebesgue, this can be easily relaxed to the case
where $m$ is absolutely continuous with respect to Lebesgue and is of
interest where the system is not everywhere stabilizable. If it is
known there are regions of the
state-space  not stabilizable, then $m$ can be chosen such
that its support is zero on these regions.  If the regions are not
known a priori then, \eqref{lyaMeasLP} can be modified to minimize
the $l_1$-norm of the constraint residuals.  This is similar to
the \emph{feasibility phase}  commonly employed in LP algorithms \cite{PDIPM}.


\section{Examples}\label{examples}

The results in this section  have been obtained using an interior-point
algorithm, \textsf{IPOPT} \cite{ipopt}. \textsf{IPOPT} is an
open-source software available through the COIN-OR repository \cite{coinorrep}, developed for solving
large-scale non-convex nonlinear programs. We solved the linear programs using the Mehrotra predictor-corrector algorithm \cite{NocedalWright}  for linear and convex quadratic programs implemented in IPOPT.

\subsection{Standard Map}

The standard map is a 2D map also known as the Chirikov map. It is
one of the most widely studied maps in dynamical systems
\cite{Vaidya_control_twist}.

The standard map is obtained by taking a Poincare map of one degree of freedom Hamiltonian system forced with a time-periodic perturbation and by writing the system in {\it action-angle} coordinates. The standard map forms a basic building block for studying higher degree of
freedom Hamiltonian systems. The control standard map is described by the
following set of equations \cite{Vaidya_control_twist}:
\begin{eqnarray}
&x_{n+1} =x_{n} + y_{n} + Ku\sin{{2}{\pi}{x_{n}}} \;\;({\rm mod \;1})\nonumber\\
&y_{n+1} = y_{n} + Ku\sin{{2}{\pi}{x_{n}}} \;\;({\rm mod \;1}),
\end{eqnarray}
where $(x,y)\in R:=\{[0,1)\times
[0,1)\}$ and $u$ is the control input. The dynamics of the standard map with $u \equiv 1$ and $K=0.25$ is shown in Fig. \ref{standard_map}. The phase space dynamics consists of a mix of  chaotic and regular regions. With the increase in the perturbation parameter, $K$, the dynamics of the system become more chaotic.
\begin{figure}
\begin{center}
\vspace{-0.1in}
{\scalebox{0.35}{\includegraphics{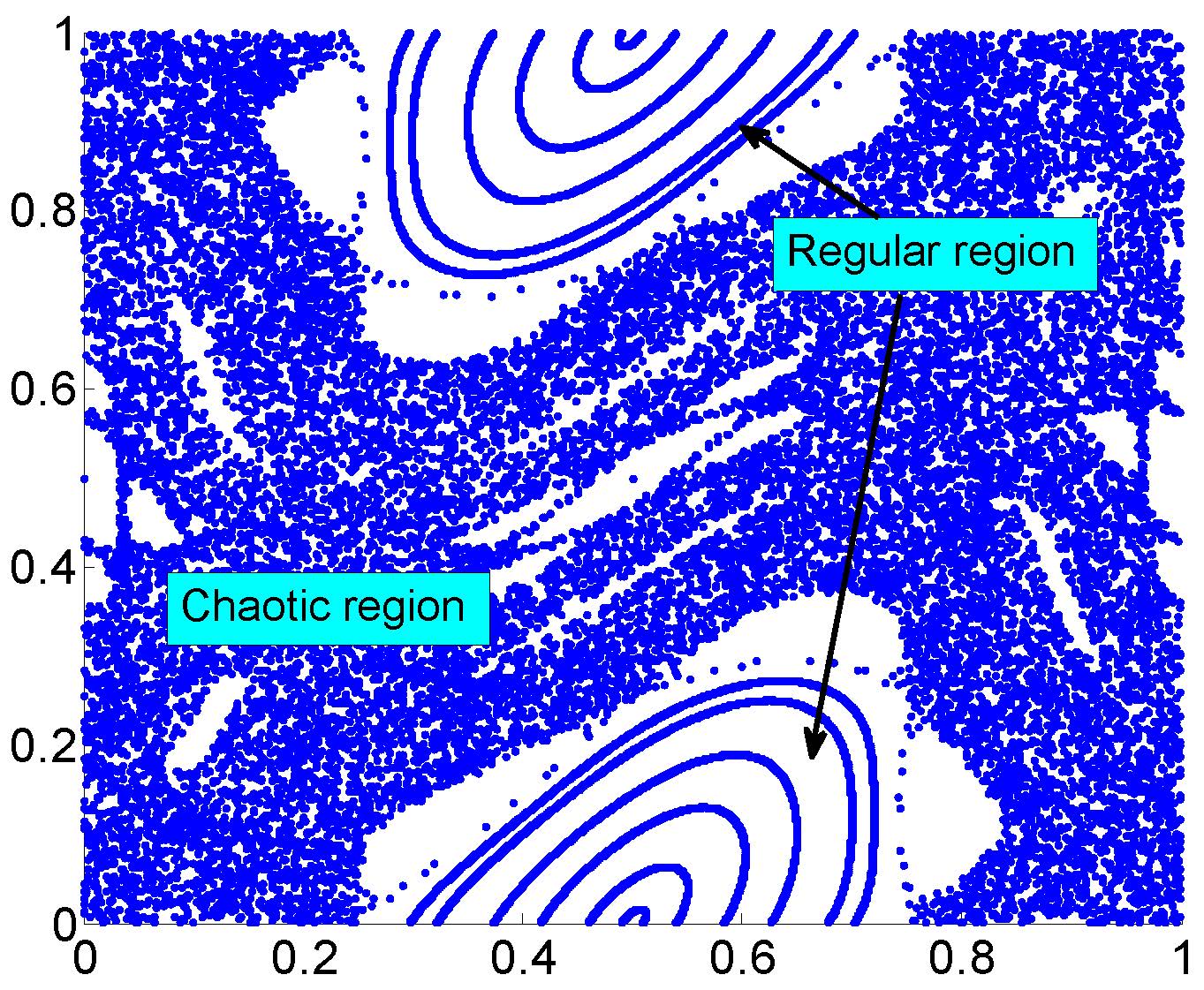}}}
\vspace{-0.1in}
\caption{Dynamics of the standard map.}
\label{standard_map}
\end{center}
\end{figure}

For the uncontrolled system (i.e., $u\equiv 0$), the entire phase space is foliated with periodic and quasi-periodic orbits. In particular, every $y=$ constant line is invariant under the system dynamics, and  consists of periodic and quasi-periodic orbits for rational and irrational values of $y$, respectively. Our objective is to globally stabilize the period two orbit located at $(x,y) =
(0.25,0.5)$ and $(x,y) = (0.75,0.5)$, while minimizing the cost
function $G(x,y,u)=x^2+y^2+u^2$. To construct the finite dimensional
approximation of the P-F operator, the phase space, $R$, is partitioned into $50\times 50$ squares: each cell has 10 sample points.
The discretization for the control space is chosen to be equal to $u^a = [-0.5 : 0.05 : 0.5]$. In Figs. \ref{stp2}a and \ref{stp2}b,  we show the plot for optimal cost function and control input, respectively. We observe in Fig. \ref{stp2}b  the control values used to control the system are approximately antisymmetric about the origin. This antisymmetry is inherent in the standard map and can also be observed in the uncontrolled standard map plot in Fig. \ref{stp2}b.

\begin{figure}[here] \centering
\subfigure[]
{\includegraphics[width=2.5in]{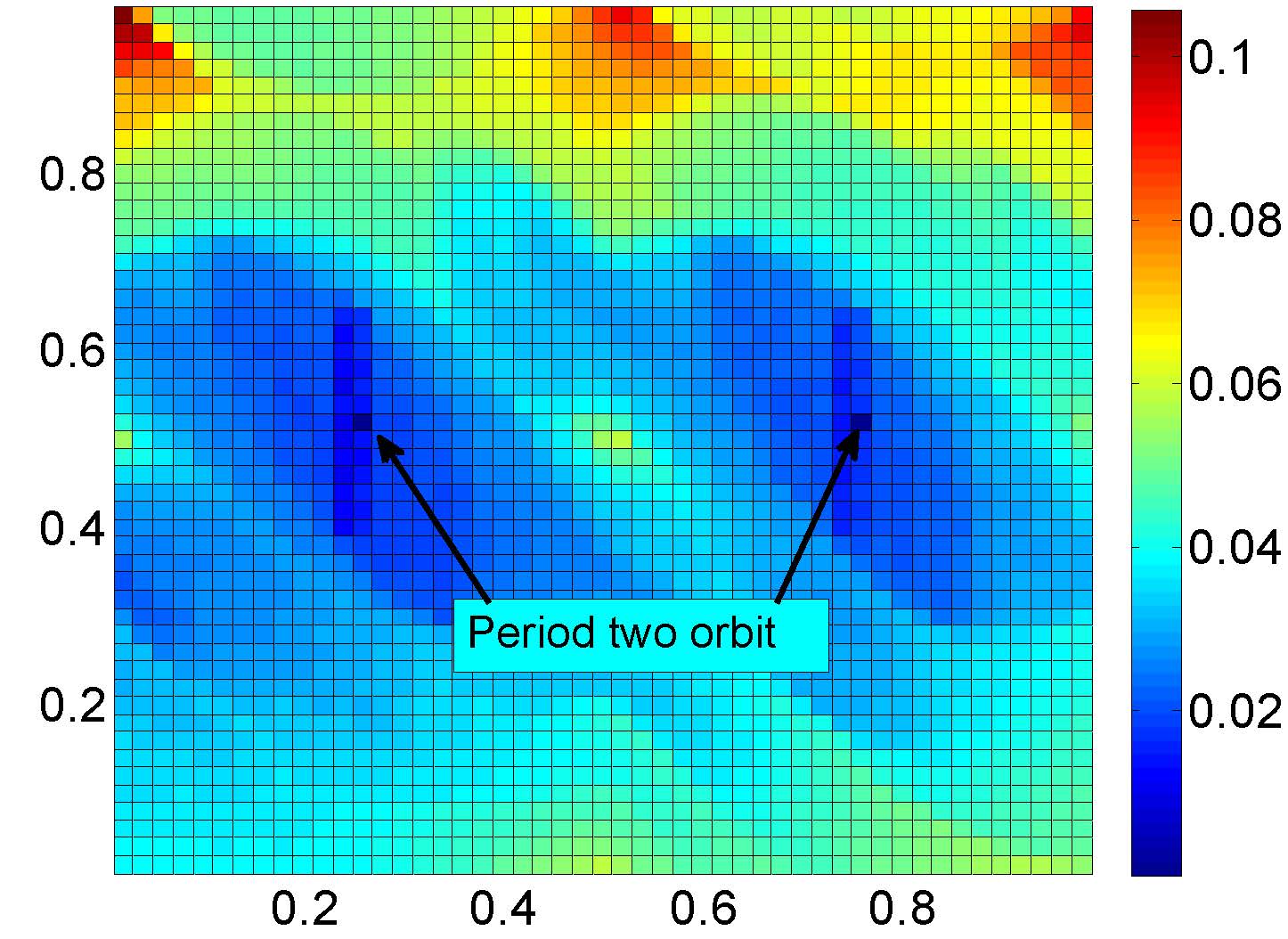}} \hspace{0.05in}\subfigure[]
{\includegraphics[width=2.5in]{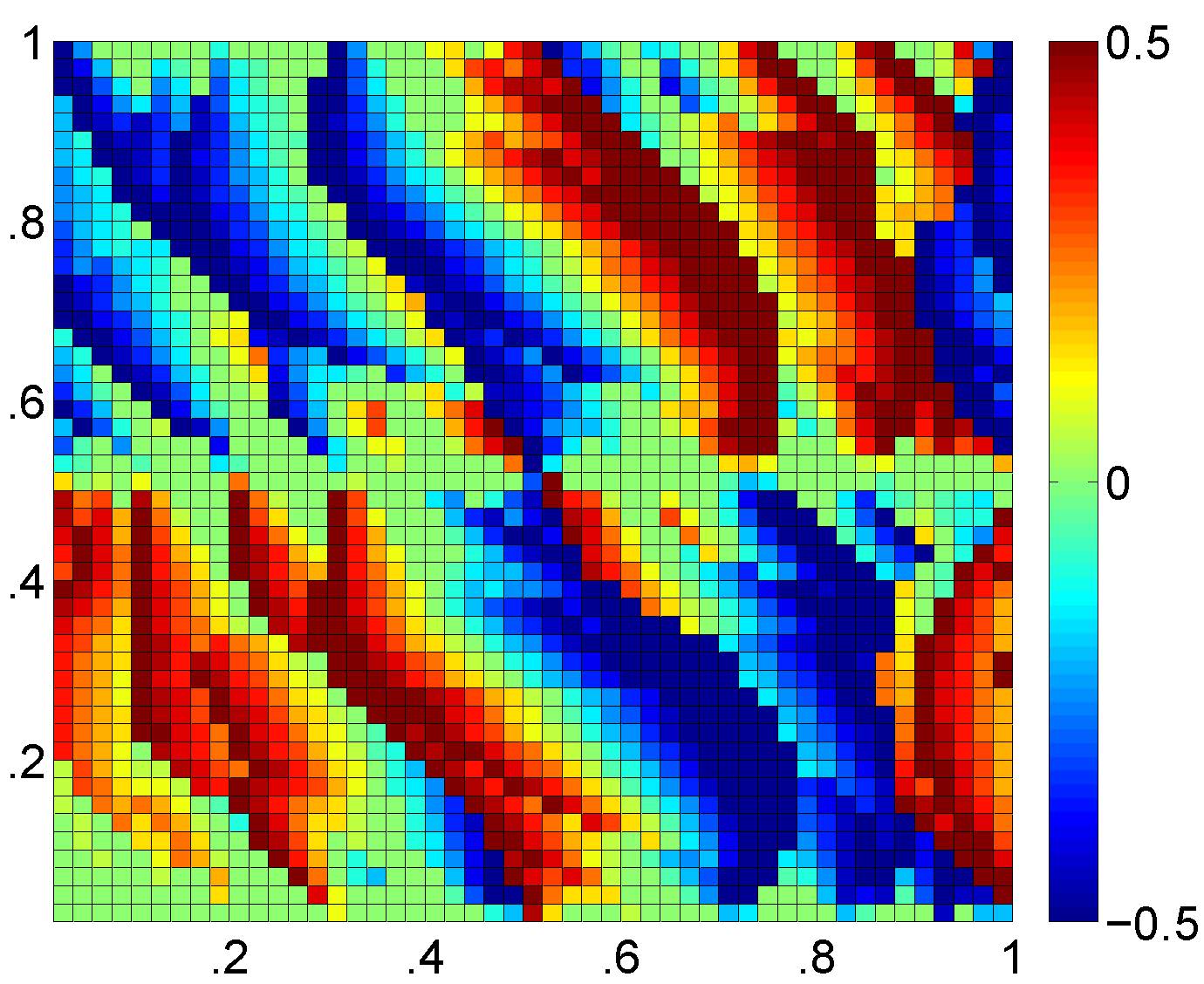}}
 \caption{a) Optimal cost function;
b) Optimal control input.} \label{stp2}
\end{figure}

\section{Conclusions}\label{conclusion}
Lyapunov measure is used for the optimal stabilization of an
attractor set for a discrete time dynamical system. The optimal
stabilization problem using a Lyapunov measure is posed as an
infinite dimensional linear program. A computational framework based
on set oriented numerical methods is proposed for the finite
dimensional approximation of the linear program.

The set-theoretic notion of a.e. stability introduced by
the Lyapunov measure offers several advantages for the problem of
stabilization. First, the controller designed using the Lyapunov
measure exploits the natural dynamics of the system by allowing
the existence of unstable dynamics in the complement of the
stabilized set.
Second, the Lyapunov
measure provides a systematic framework for the problem of
stabilization and control of a system with complex non-equilibrium
behavior.

%
%

\section{Acknowledgments}
The authors would like to acknowledge Amit Diwadkar from Iowa State University for help with the simulation. We thank an anonymous referee of the previous version of the paper for improving the quality of the paper. This research work was supported by NSF Grant \# CMMI-0807666 and ECCS-1002053.

\bibliographystyle{IEEEtran}
\bibliography{ref}

\begin{thebibliography}{10}
\providecommand{\url}[1]{#1}
\csname url@samestyle\endcsname
\providecommand{\newblock}{\relax}
\providecommand{\bibinfo}[2]{#2}
\providecommand{\BIBentrySTDinterwordspacing}{\spaceskip=0pt\relax}
\providecommand{\BIBentryALTinterwordstretchfactor}{4}
\providecommand{\BIBentryALTinterwordspacing}{\spaceskip=\fontdimen2\font plus
\BIBentryALTinterwordstretchfactor\fontdimen3\font minus
  \fontdimen4\font\relax}
\providecommand{\BIBforeignlanguage}[2]{{%
\expandafter\ifx\csname l@#1\endcsname\relax
\typeout{** WARNING: IEEEtran.bst: No hyphenation pattern has been}%
\typeout{** loaded for the language `#1'. Using the pattern for}%
\typeout{** the default language instead.}%
\else
\language=\csname l@#1\endcsname
\fi
#2}}
\providecommand{\BIBdecl}{\relax}
\BIBdecl

\bibitem{vinterpaper}
R.~B. Vinter, ``{Convex Duality and Nonlinear Optimal Control},'' \emph{{SIAM J
  Control and Optimization}}, vol.~31, pp. 518--538, 1993.

\bibitem{hedlundrantzer}
S.~Hedlund and A.~Rantzer, ``{Convex Dynamic Programming for Hybrid Systems},''
  \emph{{IEEE Transactions on Automatic Control}}, vol.~47, no.~9, pp.
  1536--1540, 2002.

\bibitem{prajnarantzer}
S.~Prajna and A.~Rantzer, ``{Convex Programs for Temporal Verification of
  Nonlinear Dynamical Systems},'' \emph{{SIAM Journal on Control and
  Optimization}}, vol.~46, no.~3, pp. 999--1021, 2007.

\bibitem{vanhandel}
R.~Van~Handel, ``{Almost global stochastic stability},'' \emph{{SIAM Journal on
  Control and Optimization}}, vol.~45, pp. 1297--1313, 2006.

\bibitem{LassHernBook}
O.~Hern\'{a}ndez-Lerma and J.~B. Lasserre, \emph{Discrete-time Markov Control
  Processes: Basic Optimality Criteria}.\hskip 1em plus 0.5em minus 0.4em\relax
  Springer-Verlag, New York, 1996.

\bibitem{approxInfLP}
------, ``Approximation schemes for infinite linear programs,'' \emph{SIAM J.
  Optimization}, vol.~8, no.~4, pp. 973--988, 1998.

\bibitem{Grune_04}
L.~Gr{\"u}ne, ``Error estimation and adaptive discretization for the discrete
  stochastic {Hamilton-Jacobi-Bellman} equation,'' \emph{Numerische
  Mathematik}, vol.~99, pp. 85--112, 2004.

\bibitem{cell-cell1}
L.~G. Crespo and J.~Q. Sun, ``Solution of fixed final state optimal control
  problem via simple cell mapping,'' \emph{Nonlinear dynamics}, vol.~23, pp.
  391--403, 2000.

\bibitem{Junge_Osinga}
O.~Junge and H.~Osinga, ``A set-oriented approach to global optimal control,''
  \emph{ESAIM: Control, Optimisation and Calculus of Variations}, vol.~10,
  no.~2, pp. 259--270, 2004.

\bibitem{Junge_scl_05}
L.~Gr{\"u}ne and O.~Junge, ``A set-oriented approach to optimal feedback
  stabilization,'' \emph{Systems Control Lett.}, vol.~54, no.~2, pp. 169–--180,
  2005.

\bibitem{Hernandez_ocp}
D.~Hernandez-Hernandez, O.~Hernandez-Lerma, and M.~Taksar, ``A linear
  programming approach to deterministic optimal control problems,''
  \emph{Applicationes Mathematicae}, vol.~24, no.~1, pp. 17--33, 1996.

\bibitem{Gait_ocp}
V.~Gaitsgory and S.~Rossomakhine, ``Linear programming approach to
  deterministic long run average optimal control problems,'' \emph{SIAM J.
  Control ad Optimization}, vol.~44, no.~6, pp. 2006--2037, 2006.

\bibitem{Lasserre_ocp}
J.~Lasserre, C.~Prieur, and D.~Henrion, ``Nonlinear optimal control: Numerical
  approximation via moment and {LMI}-relaxations,'' in \emph{Proceeding of IEEE
  {C}onference on {D}ecision and {C}ontrol}, Seville, Spain, 2005.

\bibitem{Meyn_sadhana}
S.~Meyn, ``{Algorithm for optimization and stabilization of controlled {M}arkov
  chains},'' \emph{{Sadhana}}, vol.~24, pp. 339--367, 1999.

\bibitem{viscosity_solnHJB_book}
M.~Bardi and I.~Capuzzo-Dolcetta, \emph{{Optimal control and viscosity
  solutions of Hamilton-Jacobi-Bellman equations}}.\hskip 1em plus 0.5em minus
  0.4em\relax Boston: Birkhauser, 1997.

\bibitem{Rantzer01}
A.~Rantzer, ``A dual to {L}yapunov's stability theorem,'' \emph{Systems \&
  Control Letters}, vol.~42, pp. 161--168, 2001.

\bibitem{Praly_conconvex}
C.~Prieur and L.~Praly, ``Uniting local and global controller,'' in
  \emph{Proceedings of IEEE Conference on Decision and Control}, AZ, 1999, pp.
  1214--1219.

\bibitem{VaidyaMehtaTAC}
U.~Vaidya and P.~G. Mehta, ``Lyapunov measure for almost everywhere
  stability,'' \emph{IEEE Transactions on Automatic Control}, vol.~53, pp.
  307--323, 2008.

\bibitem{Rajeev_continuous_time_journal}
R.~Rajaram, U.~Vaidya, M.~Fardad, and B.~Ganapathysubramanian, ``Almost
  everywhere stability: Linear transfer operator approach,'' \emph{Journal of
  Mathematical analysis and applications}, vol. 368, pp. 144--156, 2010.

\bibitem{Vaidya_CLM}
U.~Vaidya, P.~Mehta, and U.~Shanbhag, ``Nonlinear stabilization via control
  {L}yapunov measure,'' \emph{IEEE Transactions on Automatic Control}, vol.~55,
  pp. 1314--1328, 2010.

\bibitem{arvind_ocp_online}
A.~Raghunathan and U.~Vaidya, ``Optimal stabilization using {L}yapunov
  measures,'' http://www.ece.iastate.edu/{$\sim$}ugvaidya/publications.html,
  2012.

\bibitem{arvind_ocp}
------, ``Optimal stabilization using {L}yapunov measure,'' in
  \emph{Proceedings of American Control Conference}, Seattle, WA, 2008, pp.
  1746--1751.

\bibitem{Lasota}
A.~Lasota and M.~C. Mackey, \emph{Chaos, Fractals, and Noise: Stochastic
  Aspects of Dynamics}.\hskip 1em plus 0.5em minus 0.4em\relax New York:
  Springer-Verlag, 1994.

\bibitem{Vaidya_converse}
U.~Vaidya, ``Converse theorem for almost everywhere stability using {L}yapunov
  measure,'' in \emph{Proceedings of American Control Conference}, New York,
  NY, 2007.

\bibitem{disintegration}
H.~Furstenberg, \emph{Recurrence in Ergodic theory and Combinatorial Number
  Theory}.\hskip 1em plus 0.5em minus 0.4em\relax Princeston, New Jersey:
  Princeston University Press, 1981.

\bibitem{AndNash}
E.~Anderson and P.~Nash, \emph{Linear Programming in Infinite-Dimensional
  Spaces - Theory and Applications}.\hskip 1em plus 0.5em minus 0.4em\relax
  John Wiley \& Sons, Chichester, U.K., 1987.

\bibitem{MR1297120}
O.~L. Mangasarian, \emph{Nonlinear programming}, ser. Classics in Applied
  Mathematics.\hskip 1em plus 0.5em minus 0.4em\relax Philadelphia, PA: Society
  for Industrial and Applied Mathematics (SIAM), 1994, vol.~10, corrected
  reprint of the 1969 original.

\bibitem{PDIPM}
S.~J. Wright, \emph{Primal-Dual Interior-Point Methods}.\hskip 1em plus 0.5em
  minus 0.4em\relax Philadelphia, Pa: Society for Industrial and Applied
  Mathematics, 1997.

\bibitem{ipopt}
A.~W\"{a}chter and L.~T. Biegler, ``On the implementaion of a primal-dual
  interior point filter line search algorithm for large-scale nonlinear
  programming,'' \emph{Mathematical Programming}, vol. 106, no.~1, pp. 25--57,
  2006.

\bibitem{coinorrep}
\BIBentryALTinterwordspacing
{COIN-OR Repository}. [Online]. Available: \url{http://www.coin-or.org/}
\BIBentrySTDinterwordspacing

\bibitem{NocedalWright}
\BIBentryALTinterwordspacing
J.~Nocedal and S.~Wright, \emph{Numerical optimization}, ser. Springer series
  in operations research.\hskip 1em plus 0.5em minus 0.4em\relax Springer,
  2006. [Online]. Available:
  \url{http://books.google.com/books?id=eNlPAAAAMAAJ}
\BIBentrySTDinterwordspacing

\bibitem{Vaidya_control_twist}
U.~Vaidya and I.~Mezi\'{c}, ``Controllability for a class of area preserving
  twist maps,'' \emph{Physica D}, vol. 189, pp. 234--246, 2004.

\end{thebibliography}

%
%
%





\end{document}